\newtheorem{theorem}{Theorem}[section]
\theoremstyle{definition}
\newtheorem{definition}[theorem]{Definition}
\newtheorem{remark}{Remark}
\newtheorem*{notation}{Notation}
\newtheorem{example}{Example}
\title[Noether's theorem for problems with time delay]{Noether's Symmetry Theorem
for Variational\\ and Optimal Control Problems with Time Delay}
\author[Gast\~{a}o S. F. Frederico and Delfim F. M. Torres]{}
\subjclass{Primary: 49K05, 49S05.}
\keywords{Time delay, invariance, symmetries, constants of motion,
DuBois--Reymond necessary optimality condition, Noether's theorem.}
\email{gastao.frederico@ua.pt}
\email{delfim@ua.pt}
\thanks{The first author is supported by FCT post-doc grant SFRH/BPD/51455/2011}
\begin{document}

\maketitle


\centerline{\scshape Gast\~{a}o S. F. Frederico}
\medskip
{\footnotesize
 \centerline{Department of Science and Technology, University of Cape Verde}
 \centerline{Praia, Santiago, Cape Verde}
 \smallskip
 \centerline{Center for Research and Development in Mathematics and Applications}
 \centerline{Department of Mathematics, University of Aveiro, 3810-193 Aveiro, Portugal}
}

\medskip


\centerline{\scshape Delfim F. M. Torres}
\medskip
{\footnotesize
 \centerline{Center for Research and Development in Mathematics and Applications}
 \centerline{Department of Mathematics, University of Aveiro, 3810-193 Aveiro, Portugal}
}


\bigskip

\centerline{To Professor Helmut Maurer on the occasion of his 65th birthday.}


\begin{abstract}
We extend the DuBois--Reymond necessary optimality condition and
Noether's symmetry theorem to the time delay variational setting.
Both Lagrangian and Hamiltonian versions of Noether's theorem are
proved, covering problems of the calculus of variations
and optimal control with delays.
\end{abstract}


\section{Introduction}

The concept of symmetry plays an important role
in Physics and Mathematics \cite{Torres:Kiev2004}.
Symmetries are described by transformations that
applied to a system result in the same object
after the transformation is carried out \cite{Gouveia:Torres:Rocha}.
They are described mathematically by parameter groups of transformations
\cite{Gouveia:Torres:2009}. Their importance ranges
from fundamental and theoretical aspects to concrete applications,
having profound implications on the dynamical behavior of systems
and on their basic qualitative properties \cite{Torres:2004}.
Another fundamental notion is the concept of a conservation law \cite{Gouveia:Torres:2005}.
Typical applications of conservation laws in the calculus of variations
and optimal control involve reducing the number of degrees of freedom,
thus reducing the problem to a lower dimension and facilitating the integration
of the differential equations given by the necessary optimality conditions
\cite{Rocha:Torres,Torres:2002}.

All differential equations in physics have a variational structure.
In other words, the equations of motion of a physical system
are the Euler--Lagrange equations of a certain variational problem. It turns out that
the conservation laws are the result of invariance of the action with respect to a continuous
group of transformations, given by some symmetry principle. The more general expression of
the interrelation between symmetry/variational structure/conservation is given by Noether's theorem.
Noether's theorem asserts that the conservation laws for a system of differential equations
that correspond to the Euler--Lagrange equations of a certain variational problem come
from the invariance of the variational functional with respect to a parameter continuous
group of transformations \cite{Torres:proper}. In the last few decades,
Noether's principle has been formulated in various contexts: see
\cite{Bartos,Cresson,GF:IJTS:07,GF:JMAA:07,GF2010,NataliaNoether}
and references cited therein. Here, we generalize Noether's theorem
to variational and control problems with time delay.

Variational and control systems with delays
in the state and/or control variables play an important role
in the modeling of phenomena in various applied fields \cite{Basin:book}.
The research literature dedicated to the calculus of variations
and optimal control with delays is vast but, to the best of our knowledge,
it does not include a Noether theorem.
For a gentle introduction to control problems with time delay
we refer the reader to the classical references \cite{Kra,ogu}.
Optimal control problems with delays in the state and control variables,
subject to mixed-state and control-state constraints,
are studied in \cite{Bok,GoKeMa,Kharatishvili}.

This article is organized as follows. In Section~\ref{sec:cNT}
we review one of the proofs of the Noether symmetry theorem.
In Section~\ref{sec:delay} we use the Euler--Lagrange equations
with time delay and respective extremals to prove an extension
of Noether's theorem for problems of the calculus of
variations (Theorem~\ref{theo:tnnd}) and optimal control
(Theorem~\ref{thm:NT:OC}) with time delay.
The results are proved by first extending the classical
DuBois--Reymond necessary optimality condition to the
calculus of variations and optimal control with time delay
(Theorem~\ref{theo:cdrnd} and Theorem~\ref{gndb}, respectively).
Two illustrative examples showing the application of our main results
are given in Section~\ref{exa}.


\section{Review of the Classical Noether's Theorem}
\label{sec:cNT}

There are several different ways to prove the classical Noether's theorem
\cite{Gastao:PhD:thesis}. In this section we review one of these proofs.
In Section~\ref{sec:delay} we then show how this approach can be extended
to problems with time delay. We begin by formulating the fundamental problem
of the calculus of variations: to minimize
\begin{equation}
\label{P}
\mathcal{I}[q(\cdot)] = \int_a^b L\left(t,q(t),\dot{q}(t)\right) dt
\end{equation}
under given boundary conditions $q(a)=q_{a}$ and $q(b)=q_{b}$, and
where $\dot{q} = \frac{dq}{dt}$. The Lagrangian $L :[a,b] \times
\mathbb{R}^{n} \times \mathbb{R}^{n} \rightarrow \mathbb{R}$ is
assumed to be a $C^{1}$-function with respect to all arguments,
and admissible functions $q(\cdot)$ are assumed to be $C^2$-smooth.

\begin{definition}[Invariance of \eqref{P}]
\label{def:inva}
Consider the following $s$-parameter group of infinitesimal transformations:
\begin{equation}
\label{eq:tinf}
\begin{cases}
\bar{t} = t + s\eta(t,q) + o(s),\\
\bar{q}(t) = q(t) + s\xi(t,q) + o(s),
\end{cases}
\end{equation}
where $\eta \in C^1\left(\mathbb{R}^{1+n}, \mathbb{R}\right)$
and $\xi \in C^1\left(\mathbb{R}^{1+n}, \mathbb{R}^n\right)$ are given functions.
The functional \eqref{P} is said to be invariant under \eqref{eq:tinf} if
\begin{equation}
\label{eq:inv1}  \int_{t_{a}}^{t_{b}}
L\left(t,q(t),\dot{q}(t)\right)dt =
\int_{\bar{t}(t_a)}^{\bar{t}(t_b)}
L\left(\bar{t},\bar{q}(\bar{t}),\dot{\bar{q}}(\bar{t})\right)d\bar{t}
\end{equation}
for any subinterval $[{t_{a}},{t_{b}}] \subseteq [a,b]$.
\end{definition}

Throughout the text we denote by $\partial_{i}L$ the partial derivative of $L$
with respect to its $i$th argument, $i = 1,2,3$.

\begin{theorem}[Necessary condition of invariance]
\label{theo:cnsi}
If functional \eqref{P} is invariant under
transformations \eqref{eq:tinf}, then
\begin{equation}
\label{eq:cnsi}
\partial_{1}L\left(t,q,\dot{q}\right)\eta
+\partial_{2}L\left(t,q,\dot{q}\right)\cdot\xi
+\partial_{3}L\left(t,q,\dot{q}\right)\cdot\left(\dot{\xi}
-\dot{q}\dot{\eta}\right)+L\left(t,q,\dot{q}\right)\dot{\eta}=0 \, .
\end{equation}
\end{theorem}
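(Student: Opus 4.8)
The plan is to reduce the integral identity \eqref{eq:inv1} to a pointwise relation between the two integrands and then extract \eqref{eq:cnsi} by differentiating with respect to the group parameter $s$ at $s=0$. First I would perform, on the right-hand side of \eqref{eq:inv1}, the change of variables $\bar t = \bar t(t)$ dictated by \eqref{eq:tinf}, so that both sides become integrals over the common interval $[t_a,t_b]$. From $\bar t = t + s\eta(t,q) + o(s)$ I read off the Jacobian factor
\[
\frac{d\bar t}{dt} = 1 + s\dot\eta + o(s),
\]
and from $\bar q = q + s\xi + o(s)$, together with the chain rule $\dot{\bar q}(\bar t) = (d\bar q/dt)/(d\bar t/dt)$, a first-order expansion
\[
\dot{\bar q}(\bar t) = \frac{\dot q + s\dot\xi + o(s)}{1 + s\dot\eta + o(s)} = \dot q + s\left(\dot\xi - \dot q\,\dot\eta\right) + o(s).
\]

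After this substitution, identity \eqref{eq:inv1} takes the form
\[
\int_{t_a}^{t_b} L(t,q,\dot q)\,dt = \int_{t_a}^{t_b} L\!\left(\bar t,\bar q,\dot{\bar q}(\bar t)\right)\left(1 + s\dot\eta + o(s)\right)dt .
\]
Since this holds for \emph{every} subinterval $[t_a,t_b]\subseteq[a,b]$, the fundamental lemma of the calculus of variations lets me equate the two integrands pointwise in $t$. I would then regard the resulting identity as an equality of two functions of $s$ (the left-hand side being constant in $s$) and differentiate both sides with respect to $s$ at $s=0$. Applying the chain rule to $L$ and using the expansions above --- noting $\partial_s\bar t|_{s=0}=\eta$, $\partial_s\bar q|_{s=0}=\xi$, $\partial_s\dot{\bar q}|_{s=0}=\dot\xi-\dot q\,\dot\eta$, and $\partial_s(d\bar t/dt)|_{s=0}=\dot\eta$ --- yields precisely \eqref{eq:cnsi}.

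The change of variables, the differentiation under the integral sign, and the passage from arbitrary subintervals to a pointwise identity are all routine. The step that demands the most care is the linearization of the transformed velocity $\dot{\bar q}(\bar t)$: one must correctly compute it as a $t$-derivative divided by the Jacobian and then expand the quotient to first order in $s$, which is exactly what produces the characteristic combination $\dot\xi - \dot q\,\dot\eta$ in the third term of \eqref{eq:cnsi}. I would also take care to make precise that the $o(s)$ remainders arise from a genuinely $C^1$ family of transformations, so that differentiation at $s=0$ is legitimate and the remainder terms vanish in the limit.
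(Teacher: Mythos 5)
Your proof is correct and follows essentially the same route as the paper: localize the integral identity \eqref{eq:inv1} to a pointwise equality (valid because it holds on every subinterval), write the transformed velocity as the quotient $\bigl(\dot{q}+s\dot{\xi}+o(s)\bigr)/\bigl(1+s\dot{\eta}+o(s)\bigr)$ together with the Jacobian factor $d\bar{t}/dt$, and differentiate with respect to $s$ at $s=0$. Your expansion producing the combination $\dot{\xi}-\dot{q}\,\dot{\eta}$ is exactly the computation implicit in the paper's equation \eqref{eq:inv2}.
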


\begin{proof}
Since \eqref{eq:inv1} is satisfied for any subinterval
$[{t_{a}},{t_{b}}]$ of $[a,b]$, one can remove
the integral sign in \eqref{eq:inv1} and write the
equivalent equality
\begin{equation}
\label{eq:inv2} L\left(t,q,\dot{q}\right) =
L\left(t+s\eta+o(s),q+s\xi+o(s), \frac{\dot{q}+
s\dot{\xi}+o(s)}{1+s
\dot{\eta}+o(s)}\right)\frac{d\bar{t}}{dt} \, .
\end{equation}
Equation \eqref{eq:cnsi} is obtained differentiating both sides of
condition \eqref{eq:inv2} with respect to parameter $s$ and then putting $s=0$.
\end{proof}

\begin{definition}[Euler--Lagrange extremal]
A function $q(\cdot) \in C^2$ is said to be an extremal of \eqref{P}
if it satisfies the Euler--Lagrange equation
\begin{equation}
\label{eq:el}
\frac{d}{{dt}}\partial_{3} L\left(t,q(t),\dot{q}(t)\right) =
\partial_{2} L\left(t,q(t),\dot{q}(t)\right), \quad t\in [a,b].
\end{equation}
\end{definition}

\begin{definition}[Constant of motion/conservation law]
A quantity $C(t,q(t),\dot{q}(t))$ is said to be a
\emph{constant of motion} for \eqref{P} if
\begin{equation}
\label{eq:conslaw}
\frac{d}{dt}C(t,q(t),\dot{q}(t))=0,
\end{equation}
$t \in [a,b]$, along all extremals $q(\cdot)$ of \eqref{P}.
The equality \eqref{eq:conslaw} is then a conservation law.
\end{definition}

\begin{theorem}[DuBois--Reymond necessary optimality condition]
\label{theo:cdr} If function $q(\cdot)$ is an extremal of
functional (\ref{P}), then
\begin{equation}
\label{eq:cdr}
\partial_{1}
L\left(t,q(t),\dot{q}(t)\right)=\frac{d}{dt}\left\{L\left(t,q(t),\dot{q}(t)\right)
-\partial_{3} L\left(t,q(t),\dot{q}(t)\right)\cdot\dot{q}(t)\right\}\, .
\end{equation}
\end{theorem}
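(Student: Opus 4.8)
The plan is to establish the identity \eqref{eq:cdr} by a direct computation: I would differentiate the quantity $L - \partial_3 L \cdot \dot{q}$ along the extremal $q(\cdot)$ and then simplify the result using the Euler--Lagrange equation \eqref{eq:el}. The first step is to expand the total time derivative of the Lagrangian evaluated along $q(\cdot)$ by the chain rule, obtaining
\[
\frac{d}{dt} L\left(t,q(t),\dot{q}(t)\right) = \partial_1 L + \partial_2 L \cdot \dot{q} + \partial_3 L \cdot \ddot{q},
\]
where here and below all partial derivatives of $L$ are understood to be evaluated at $\left(t,q(t),\dot{q}(t)\right)$.

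Next I would differentiate the bracketed expression on the right-hand side of \eqref{eq:cdr}. Applying the product rule to the term $\partial_3 L \cdot \dot{q}$ gives
\[
\frac{d}{dt}\left\{L - \partial_3 L \cdot \dot{q}\right\} = \frac{d}{dt} L - \frac{d}{dt}\left(\partial_3 L\right)\cdot \dot{q} - \partial_3 L \cdot \ddot{q}.
\]
Substituting the chain-rule expansion from the first step, the two terms containing $\ddot{q}$ cancel, and I am left with
\[
\frac{d}{dt}\left\{L - \partial_3 L \cdot \dot{q}\right\} = \partial_1 L + \left(\partial_2 L - \frac{d}{dt}\,\partial_3 L\right)\cdot \dot{q}.
\]

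Finally, since $q(\cdot)$ is an extremal it satisfies the Euler--Lagrange equation \eqref{eq:el}, so the factor $\partial_2 L - \frac{d}{dt}\,\partial_3 L$ vanishes identically along $q(\cdot)$, and the claimed identity \eqref{eq:cdr} follows at once. The computation presupposes $q(\cdot) \in C^2$ so that $\ddot{q}$ exists and $\frac{d}{dt}\,\partial_3 L$ is well defined; this is guaranteed by the standing smoothness assumptions on $L$ and on the admissible functions. I do not anticipate any genuine obstacle here: the argument is essentially bookkeeping of the chain rule together with the crucial cancellation of the second-derivative terms. The one point that warrants care is that the products $\partial_2 L \cdot \dot{q}$, $\partial_3 L \cdot \ddot{q}$, and $\frac{d}{dt}(\partial_3 L)\cdot \dot{q}$ are inner products of vectors in $\mathbb{R}^n$, so the product rule must be applied componentwise and the terms recombined correctly before the cancellation is invoked.
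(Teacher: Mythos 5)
Your proposal is correct and follows exactly the paper's own argument: a direct computation of $\frac{d}{dt}\left\{L - \partial_3 L \cdot \dot{q}\right\}$ in which the $\ddot{q}$ terms cancel and the Euler--Lagrange equation \eqref{eq:el} eliminates the remaining factor. The paper's proof is just a more compressed version of the same bookkeeping.
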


\begin{proof}
Follows by direct calculations,
using the Euler--Lagrange equation \eqref{eq:el}:
\begin{equation*}
\begin{split}
\frac{d}{dt}&\left\{L\left(t,q,\dot{q}\right)-\partial_{3}
L\left(t,q,\dot{q}\right)\cdot\dot{q}\right\}\\
&=\partial_{1} L\left(t,q,\dot{q}\right)+\dot{q}\cdot\left(\partial_{2}
L\left(t,q,\dot{q}\right)-\frac{d}{dt}\partial_{3}
L\left(t,q,\dot{q}\right)\right)\\
&=\partial_{1} L\left(t,q,\dot{q}\right)\, .
\end{split}
\end{equation*}
\end{proof}

\begin{theorem}[Noether's theorem]
\label{theo:tnoe}
If \eqref{P} is invariant under \eqref{eq:tinf}, then
\begin{equation*}
C(t,q,\dot{q}) = \partial_{3} L\left(t,q,\dot{q}\right)\cdot\xi(t,q)
+ \left( L(t,q,\dot{q}) - \partial_{3} L\left(t,q,\dot{q}\right)
\cdot \dot{q} \right) \eta(t,q)
\end{equation*}
is a constant of motion.
\end{theorem}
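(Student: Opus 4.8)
The plan is to show directly that the total derivative $\frac{d}{dt}C(t,q,\dot{q})$ vanishes along every extremal of \eqref{P}, which is exactly the defining condition \eqref{eq:conslaw} for $C$ to be a constant of motion. The three ingredients I would assemble are the necessary condition of invariance \eqref{eq:cnsi}, the Euler--Lagrange equation \eqref{eq:el}, and the DuBois--Reymond condition \eqref{eq:cdr}. The whole argument is then a matter of bookkeeping: these three facts combine into a single identity that forces the derivative to be zero.

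First I would differentiate $C$ term by term with the product rule, writing
\[
\frac{d}{dt}C = \frac{d}{dt}\bigl(\partial_{3} L\bigr)\cdot\xi + \partial_{3} L\cdot\dot{\xi} + \frac{d}{dt}\bigl(L - \partial_{3} L\cdot\dot{q}\bigr)\eta + \bigl(L - \partial_{3} L\cdot\dot{q}\bigr)\dot{\eta}.
\]
Next, on an extremal I would replace $\frac{d}{dt}\partial_{3} L$ by $\partial_{2} L$ via the Euler--Lagrange equation \eqref{eq:el}, and replace $\frac{d}{dt}\bigl(L - \partial_{3} L\cdot\dot{q}\bigr)$ by $\partial_{1} L$ via the DuBois--Reymond condition \eqref{eq:cdr}. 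After expanding the $\dot{\eta}$ term and regrouping, the expression becomes
\[
\frac{d}{dt}C = \partial_{1} L\,\eta + \partial_{2} L\cdot\xi + \partial_{3} L\cdot\bigl(\dot{\xi} - \dot{q}\,\dot{\eta}\bigr) + L\,\dot{\eta}.
\]
The right-hand side is precisely the left-hand side of the necessary condition of invariance \eqref{eq:cnsi}, which equals zero because \eqref{P} is invariant under \eqref{eq:tinf}. Hence $\frac{d}{dt}C = 0$ along all extremals, as required.

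I do not anticipate any serious obstacle here; the only point demanding attention is the correct grouping of the terms proportional to $\dot{\eta}$, so that the DuBois--Reymond substitution cleanly yields the combination $L\,\dot{\eta} - \partial_{3} L\cdot\dot{q}\,\dot{\eta}$ needed to reconstruct the third and fourth terms of \eqref{eq:cnsi}. The elegance of the argument lies in the fact that invariance supplies one identity while the two necessary optimality conditions supply the two total derivatives, and they match term for term with nothing left over. This same structure is what I expect to carry over, with suitable modifications, to the delayed setting in Section~\ref{sec:delay}.
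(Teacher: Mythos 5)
Your proof is correct and is essentially the paper's own argument run in reverse: the paper starts from the invariance identity \eqref{eq:cnsi} and uses the Euler--Lagrange equation \eqref{eq:el} and the DuBois--Reymond condition \eqref{eq:cdr} to regroup it into $\frac{d}{dt}C=0$, while you expand $\frac{d}{dt}C$ and use the same two substitutions to recover \eqref{eq:cnsi}. The ingredients, the term-by-term matching, and the algebra are identical, so there is nothing to add.
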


\begin{proof}
We use the Euler--Lagrange equation \eqref{eq:el}
and the DuBois--Reymond necessary optimality
condition \eqref{eq:cdr} into the necessary
condition of invariance \eqref{eq:cnsi}:
\begin{equation*}
\begin{split}
0 &= \partial_{1} L\left(t,q,\dot{q}\right)\eta+\partial_{2} L\left(t,q,\dot{q}\right)\cdot\xi
+\partial_{3}
L\left(t,q,\dot{q}\right)\cdot\left(\dot{\xi}-\dot{q}\dot{\eta}\right)
+L\left(t,q,\dot{q}\right)\dot{\eta} \\
&=\partial_{2} L\left(t,q,\dot{q}\right)\cdot\xi+\partial_{3}
L\left(t,q,\dot{q}\right)\cdot\dot{\xi}+\partial_{1}
L\left(t,q,\dot{q}\right)\eta
+\dot{\eta}\left(L\left(t,q,\dot{q}\right)-\partial_{3}
L\left(t,q,\dot{q}\right)\cdot\dot{q}\right)\\
&=\frac{d}{{dt}}\partial_{3}
L\left(t,q,\dot{q}\right)\cdot\xi+\partial_{3}
L\left(t,q,\dot{q}\right)\cdot\dot{\xi}+\frac{d}{dt}\left\{L\left(t,q,\dot{q}\right)-\partial_{3}
L\left(t,q,\dot{q}\right)\cdot\dot{q}\right\}\eta\\
&\qquad +\dot{\eta}\left(L\left(t,q,\dot{q}\right)-\partial_{3}
L\left(t,q,\dot{q}\right)\cdot\dot{q}\right)\\
&=\frac{d}{dt}\left\{\partial_{3} L\left(t,q,\dot{q}\right)\cdot\xi +
\left( L(t,q,\dot{q}) -
\partial_{3} L\left(t,q,\dot{q}\right) \cdot \dot{q} \right)
\eta\right\}.
\end{split}
\end{equation*}
\end{proof}


\section{Main Results: Noether type theorems with time delay}
\label{sec:delay}

In Section~\ref{sub1} we prove two important results for variational problems
with time delays: a DuBois--Reymond necessary optimality condition
(Theorem~\ref{theo:cdrnd}) and a Noether theorem (Theorem~\ref{theo:tnnd}).
The results are then extended in Section~\ref{sub2} to the more general
optimal control setting with delays.


\subsection{Calculus of variations with time delay}
\label{sub1}

We begin by formulating the fundamental problem of the calculus of variations
with time delay: to minimize
\begin{equation}
\label{Pe}
\mathcal{I}^{\tau}[q(\cdot)] = \int_{t_{1}}^{t_{2}}
L\left(t,q(t),\dot{q}(t),q(t-\tau),\dot{q}(t-\tau)\right) dt
\end{equation}
subject to
\begin{equation}
\label{Pe2}
q(t)=\delta(t), \quad t\in[t_{1}-\tau,t_{1}],
\end{equation}
where the Lagrangian $L :[t_1,t_2] \times \mathbb{R}^{n} \times \mathbb{R}^{n}
\times \mathbb{R}^{n}\times \mathbb{R}^{n} \rightarrow \mathbb{R}$
is assumed to be a $C^{1}$-function with respect to all its arguments,
admissible functions $q(\cdot)$ are assumed to be $C^2$-smooth,
$t_{1}< t_{2}$ are fixed in $\mathbb{R}$,
$\tau$ is a given positive real number such that $\tau<t_{2}-t_{1}$,
and $\delta$ is a given piecewise smooth function.

\begin{notation}
For convenience, we introduce the operator $[\cdot]_{\tau}$ defined by
$$
[q]_{\tau}(t)=(t,q(t),\dot{q}(t),q(t-\tau),\dot{q}(t-\tau))\,.
$$
\end{notation}

\begin{theorem}[Euler--Lagrange equations with time delay \cite{CD:Agrawal:1997,DH:1968}]
\label{th:EL1}
If $q(\cdot)$ is a minimizer of problem \eqref{Pe}--\eqref{Pe2},
then $q(\cdot)$ satisfies the following
\emph{Euler--Lagrange equations with time delay}:
\begin{equation}
\label{EL1}
\begin{cases}
\frac{d}{dt}\left\{\partial_{3}L[q]_{\tau}(t)+
\partial_{5}L[q]_{\tau}(t+\tau)\right\}
=\partial_{2}L[q]_{\tau}(t)+\partial_{4}L[q]_{\tau}(t+\tau),
\quad t_{1}\leq t\leq t_{2}-\tau,\\
\frac{d}{dt}\partial_{3}L[q]_{\tau}(t) =\partial_{2}L[q]_{\tau}(t),
\quad t_{2}-\tau\leq t\leq t_{2},
\end{cases}
\end{equation}
where $\partial_{i}L$ is the partial derivative of $L$
with respect to its $i$th argument, $i = 1,\ldots,5$.
\end{theorem}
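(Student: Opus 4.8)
The plan is to derive the time-delay Euler–Lagrange equations by the standard variational argument, but carefully tracking how the delayed arguments couple points $t$ and $t+\tau$. First I would take an admissible variation $q(t)+\epsilon h(t)$, where $h(\cdot)$ is $C^2$-smooth and respects the fixed data: since \eqref{Pe2} prescribes $q$ on $[t_1-\tau,t_1]$, we need $h(t)=0$ for $t\in[t_1-\tau,t_1]$, and since $q(t_2)$ is a fixed endpoint we also need $h(t_2)=0$. Substituting into \eqref{Pe} and computing $\frac{d}{d\epsilon}\mathcal{I}^{\tau}[q+\epsilon h]\big|_{\epsilon=0}=0$ gives, writing everything with the $[q]_\tau$ notation,
\begin{equation*}
\int_{t_1}^{t_2}\Bigl\{\partial_2 L[q]_\tau(t)\cdot h(t)+\partial_3 L[q]_\tau(t)\cdot\dot h(t)
+\partial_4 L[q]_\tau(t)\cdot h(t-\tau)+\partial_5 L[q]_\tau(t)\cdot\dot h(t-\tau)\Bigr\}\,dt=0.
\end{equation*}

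The key step is to eliminate the delayed terms $h(t-\tau),\dot h(t-\tau)$ by a change of variable. In the two integrals carrying $\partial_4$ and $\partial_5$, I would substitute $t\mapsto t+\tau$, turning $\int_{t_1}^{t_2}\partial_4 L[q]_\tau(t)\cdot h(t-\tau)\,dt$ into $\int_{t_1-\tau}^{t_2-\tau}\partial_4 L[q]_\tau(t+\tau)\cdot h(t)\,dt$, and similarly for $\partial_5$. Because $h$ vanishes on $[t_1-\tau,t_1]$, the lower limit can be raised to $t_1$, so each shifted integral runs over $[t_1,t_2-\tau]$. This is exactly the mechanism that produces the advanced argument $L[q]_\tau(t+\tau)$ appearing in \eqref{EL1} and that makes the first, coupled equation hold only on $[t_1,t_2-\tau]$ while the second, undelayed equation governs the terminal segment $[t_2-\tau,t_2]$.

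Next I would split the range of integration at $t_2-\tau$ and integrate by parts the terms containing $\dot h$. On $[t_1,t_2-\tau]$ the integrand (after collecting) contains $\bigl(\partial_3 L[q]_\tau(t)+\partial_5 L[q]_\tau(t+\tau)\bigr)\cdot\dot h(t)$, whose integration by parts yields the bracketed $\frac{d}{dt}$ term in the first equation of \eqref{EL1}; on $[t_2-\tau,t_2]$ only $\partial_3 L[q]_\tau(t)\cdot\dot h(t)$ survives, giving the undelayed bracket in the second equation. The boundary terms from integration by parts vanish: at $t_1$ and $t_2$ because $h$ vanishes there, and the interior boundary terms at $t_2-\tau$ cancel between the two pieces once one checks continuity of $h$ and of the relevant momenta. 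Collecting the coefficients of $h(t)$ on each subinterval, an appeal to the fundamental lemma of the calculus of variations (valid since $h$ is otherwise arbitrary on each open subinterval) yields the two equations in \eqref{EL1}.

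The main obstacle is the careful bookkeeping at the junction $t=t_2-\tau$: one must verify that the shifted delayed terms contribute precisely to $[t_1,t_2-\tau]$ and not beyond, and that the boundary contributions at $t_2-\tau$ coming from the two separate integrations by parts cancel, so that no spurious transversality-type condition is imposed there. Handling this cleanly — rather than the individual integrations by parts, which are routine — is where the delay structure genuinely differs from the classical case, and it is what forces the piecewise form of the Euler–Lagrange system. Since the statement is quoted from \cite{CD:Agrawal:1997,DH:1968}, I would present this as a concise reconstruction of those arguments in the present notation.
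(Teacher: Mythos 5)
The paper offers no proof of Theorem~\ref{th:EL1}: it is quoted as a known result from \cite{CD:Agrawal:1997,DH:1968}. So your reconstruction can only be judged on its own merits. Its skeleton is indeed the standard one: the first variation, the change of variables $t\mapsto t+\tau$ in the $\partial_4$ and $\partial_5$ terms, the use of $h\equiv 0$ on $[t_1-\tau,t_1]$ to raise the lower limit of the shifted integrals, and the split of the integration range at $t_2-\tau$ are all correct. (A minor point: \eqref{Pe}--\eqref{Pe2} does not explicitly fix $q(t_2)$; if that endpoint is free you would additionally get $\partial_3 L[q]_\tau(t_2)=0$, but the Euler--Lagrange equations themselves are unaffected, so your fixed-endpoint assumption is harmless.)

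The genuine flaw is precisely the step you single out as the crux: the boundary terms at the junction $t=t_2-\tau$ do \emph{not} cancel, and no continuity argument can make them cancel, because the momenta on the two sides are different functions — $\partial_3 L[q]_\tau(t)+\partial_5 L[q]_\tau(t+\tau)$ on the left piece versus $\partial_3 L[q]_\tau(t)$ on the right piece. The two integrations by parts leave
\begin{equation*}
\Bigl(\partial_3 L[q]_\tau(t_2-\tau)+\partial_5 L[q]_\tau(t_2)\Bigr)\cdot h(t_2-\tau)
-\partial_3 L[q]_\tau(t_2-\tau)\cdot h(t_2-\tau)
=\partial_5 L[q]_\tau(t_2)\cdot h(t_2-\tau),
\end{equation*}
which is generically nonzero. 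This leftover term is not a bookkeeping artifact to be argued away: since $h(t_2-\tau)$ is arbitrary, it encodes the \emph{additional} necessary condition $\partial_5 L[q]_\tau(t_2)=0$, a natural (transversality-type) condition at the junction — consistent with the continuity of the adjoint $p$ in Theorem~\ref{theo:pmpnd}, as one sees by comparing \eqref{eq:CE} and \eqref{eq:CE12} at $t=t_2-\tau$. So asserting the cancellation "by continuity" is false as stated, and invoking the condition $\partial_5 L[q]_\tau(t_2)=0$ to justify the cancellation would be circular, since it is itself a consequence of the same first variation. The repair is simple and standard: to prove \eqref{EL1}, take first variations $h$ supported in the open subinterval $(t_1,t_2-\tau)$, then variations supported in $(t_2-\tau,t_2)$; in each case all boundary terms vanish identically and the fundamental lemma gives the corresponding equation of \eqref{EL1}. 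Afterwards, general variations recover the extra condition above as a bonus, rather than as an obstacle.
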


\begin{definition}[Extremals with time delay]
\label{def:scale:ext}
The solutions $q(\cdot)$ of the  Euler--Lagrange equations
\eqref{EL1} are called \emph{extremals with time delay}.
\end{definition}

\begin{definition}[\textrm{cf.} Definition~\ref{def:inva}]
\label{def:invnd}
The functional \eqref{Pe} is said to be invariant under
the $s$-parameter group of infinitesimal transformations
\eqref{eq:tinf} if
\begin{multline}
\label{eq:invnd}
0 = \left.\frac{d}{ds}\right|_{s=0}
\int_{\bar{t}(I)} L\left(t+s\eta(t,q(t)),q(t)+s\xi(t,q(t)),
\frac{\dot{q}(t)+s\dot{\xi}(t,q(t))}{1+s\dot{\eta}(t,q(t))},\right.\\
\left.q(t-\tau)+s\xi(t-\tau,q(t-\tau)),\frac{\dot{q}(t-\tau)
+s\dot{\xi}(t-\tau,q(t-\tau))}{1+s\dot{\eta}(t-\tau,q(t-\tau))}\right)
\left(1+s\dot{\eta}(t,q(t))\right)dt
\end{multline}
for any  subinterval $I \subseteq [t_1,t_2]$.
\end{definition}

Theorem~\ref{thm:CNSI:SCV} establishes a necessary
condition of invariance for \eqref{Pe}. Conditions
\eqref{eq:cnsind1} and \eqref{eq:cnsind2} are used
in the proof of our Noether-type theorem
(Theorem~\ref{theo:tnnd}).

\begin{theorem}[\textrm{cf.} Theorem~\ref{theo:cnsi}]
\label{thm:CNSI:SCV}
If functional \eqref{Pe} is invariant under the one-parameter
group of transformations \eqref{eq:tinf}, then
\begin{multline}
\label{eq:cnsind1}
\int_{t_1}^{t_2-\tau}\Bigl[\partial_{1}
L[q]_{\tau}(t)\eta(t,q) +\left(\partial_{2}
L[q]_{\tau}(t)+\partial_4 L[q]_{\tau}(t+\tau)\right)\cdot\xi(t,q)\\
+\left(\partial_{3}L[q]_{\tau}(t)
+\partial_5L[q]_{\tau}(t+\tau)\right)\cdot\left(\dot{\xi}(t,q)
-\dot{q}(t)\dot{\eta}(t,q)\right)
+ L[q]_{\tau}(t)\dot{\eta}(t,q)\Bigr]dt = 0
\end{multline}
and
\begin{multline}
\label{eq:cnsind2}
\int_{t_2-\tau}^{t_2}\Bigl[\partial_{1}L[q]_{\tau}(t)\eta(t,q)
+\partial_{2}L[q]_{\tau}(t)\cdot\xi(t,q)\\
+\partial_{3}L[q]_{\tau}(t)\cdot\left(\dot{\xi}(t,q)
-\dot{q}(t)\dot{\eta}(t,q)\right)+L[q]_{\tau}(t)\dot{\eta}(t,q)\Bigr]dt = 0.
\end{multline}
\end{theorem}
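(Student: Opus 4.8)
The plan is to mirror the proof of Theorem~\ref{theo:cnsi}, replacing the ``remove the integral sign'' step by a differentiation under the integral sign, and then to reorganize the delayed contributions by a shift of the time variable. Since the invariance \eqref{eq:invnd} is assumed for every subinterval $I\subseteq[t_1,t_2]$, I would first differentiate the integral in \eqref{eq:invnd} with respect to $s$ and set $s=0$; the $C^1$ hypotheses on $L$, $\eta$, $\xi$ justify differentiating under the integral sign. Applying the chain rule slot by slot, the first three arguments together with the Jacobian factor $(1+s\dot\eta)$ reproduce the undelayed terms $\partial_1L[q]_\tau(t)\eta$, $\partial_2L[q]_\tau(t)\cdot\xi$, $\partial_3L[q]_\tau(t)\cdot(\dot\xi-\dot q\dot\eta)$ and $L[q]_\tau(t)\dot\eta$ exactly as in \eqref{eq:cnsi}, while the two delayed arguments contribute $\partial_4L[q]_\tau(t)\cdot\xi(t-\tau,q(t-\tau))$ and $\partial_5L[q]_\tau(t)\cdot\bigl(\dot\xi(t-\tau,q(t-\tau))-\dot q(t-\tau)\dot\eta(t-\tau,q(t-\tau))\bigr)$. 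Because the resulting integral vanishes on every subinterval and its integrand is continuous, the integrand itself vanishes identically on $[t_1,t_2]$.

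The second step is to convert the delayed coefficients into the advanced ones appearing in \eqref{eq:cnsind1}. I would integrate the pointwise identity over $[t_1,t_2]$ and perform the substitution $t\mapsto t+\tau$ in precisely the two terms carrying $\partial_4L[q]_\tau(t)$ and $\partial_5L[q]_\tau(t)$; this turns them into $\partial_4L[q]_\tau(t+\tau)\cdot\xi(t,q)$ and $\partial_5L[q]_\tau(t+\tau)\cdot(\dot\xi-\dot q\dot\eta)$, now paired with the undelayed generators, which is the same bookkeeping that produces the advanced terms in the delayed Euler--Lagrange system \eqref{EL1}. Splitting the shifted range at $t_2-\tau$ then does the rest: for $t\le t_2-\tau$ the advanced evaluation $[q]_\tau(t+\tau)$ stays inside $[t_1,t_2]$ and combines with $\partial_2,\partial_3$ to form the grouped coefficients $\partial_2L[q]_\tau(t)+\partial_4L[q]_\tau(t+\tau)$ and $\partial_3L[q]_\tau(t)+\partial_5L[q]_\tau(t+\tau)$ of \eqref{eq:cnsind1}, whereas for $t\in[t_2-\tau,t_2]$ the argument $t+\tau$ exceeds $t_2$ so no advanced contribution survives, leaving the reduced integrand of \eqref{eq:cnsind2}.

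The delicate point, and the one with no analogue in Theorem~\ref{theo:cnsi}, is the endpoint bookkeeping of this shift together with the separation into two independent identities. The substitution $t\mapsto t+\tau$ sends part of the range into the history segment $[t_1-\tau,t_1]$, on which $q$ is pinned to the prescribed initial function $\delta$ by \eqref{Pe2}; I expect the main work to be showing that this leftover boundary contribution drops out, using that the fixed history is not varied. One must also argue that \eqref{eq:cnsind1} and \eqref{eq:cnsind2} hold \emph{separately}, not merely their sum: here I would exploit the arbitrariness of the subinterval $I$ in Definition~\ref{def:invnd}, applying the pointwise consequence of invariance on the two regions $[t_1,t_2-\tau]$ and $[t_2-\tau,t_2]$ (both nonempty and disjoint thanks to $\tau<t_2-t_1$) to localize the identity to each of them. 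Once these two technical points are settled, everything else is the routine chain-rule computation carried out in parallel with the classical case.
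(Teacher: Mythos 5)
Your proposal follows essentially the same route as the paper's proof: differentiate the invariance identity at $s=0$, perform the linear change of variables $t\mapsto t+\tau$ in the two delayed terms (discarding the leftover piece over $[t_1-\tau,t_1]$ because the prescribed history is not varied), split the resulting range at $t_2-\tau$, and invoke the arbitrariness of the subinterval $I$ to obtain \eqref{eq:cnsind1} and \eqref{eq:cnsind2} separately rather than just their sum. The two ``delicate points'' you single out are exactly the ones the paper itself disposes of in a single clause each --- the history leftover via the remark that $L[q]_{\tau}(t)\equiv 0$ on $[t_1-\tau,t_1]$, and the separation via the closing appeal to arbitrary subintervals --- so your treatment matches the paper's argument and is, if anything, more explicit about where the real work lies.
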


\begin{proof}
Without loss of generality, we take $I=[t_1,t_2]$.
Then, \eqref{eq:invnd} is equivalent to
\begin{equation}
\label{eq:cnsind3}
\begin{split}
\int_{t_1}^{t_2} \Bigl[ &\partial_{1}L[q]_{\tau}(t)\eta(t,q)
+\partial_{2}L[q]_{\tau}(t)\cdot\xi(t,q)\\
&+\partial_{3}L[q]_{\tau}(t)\cdot\left(\dot{\xi}(t,q)
-\dot{q}(t)\dot{\eta}(t,q)\right)+L[q]_{\tau}(t)\dot{\eta}(t,q)\Bigr]dt\\
&+\int_{t_1}^{t_2}\Bigl[\partial_{4}
L[q]_{\tau}(t)\cdot\xi(t-\tau,q(t-\tau))\\
&+\partial_{5}L[q]_{\tau}(t)\cdot\left(\dot{\xi}(t-\tau,q(t-\tau))
-\dot{q}(t-\tau)\dot{\eta}(t-\tau,q(t-\tau))\right)\Bigr]dt= 0.
\end{split}
\end{equation}
By performing a linear change of variables $t=\sigma+\tau$ in the last integral
of \eqref{eq:cnsind3}, and keeping in mind that $L[q]_{\tau}(t)\equiv 0$ on
$[t_1-\tau,t_1]$, equation \eqref{eq:cnsind3} becomes
\begin{equation}
\label{eq:cnsind}
\begin{split}
\int_{t_1}^{t_2-\tau}\Bigl[&\partial_{1}
L[q]_{\tau}(t)\eta(t,q) +\left(\partial_{2}
L[q]_{\tau}(t)+\partial_4 L[q]_{\tau}(t+\tau)\right)\cdot\xi(t,q)\\
&+\left(\partial_{3}L[q]_{\tau}(t)+\partial_5
L[q]_{\tau}(t+\tau)\right)\cdot\left(\dot{\xi}(t,q)
-\dot{q}(t)\dot{\eta}(t,q)\right)\\
&+L[q]_{\tau}(t)\dot{\eta}(t,q)\Bigr]dt
+ \int_{t_2-\tau}^{t_2}\Bigl[\partial_{1}
L[q]_{\tau}(t)\eta(t,q) +\partial_{2}
L[q]_{\tau}(t)\cdot\xi(t,q)\\
&+\partial_{3}L[q]_{\tau}(t)\cdot\left(\dot{\xi}(t,q)
-\dot{q}(t)\dot{\eta}(t,q)\right)+L[q]_{\tau}(t)\dot{\eta}(t,q)\Bigr]dt = 0.
\end{split}
\end{equation}
Taking into consideration that \eqref{eq:cnsind} holds for an arbitrary subinterval
$I \subseteq [t_1,t_2]$, equations \eqref{eq:cnsind1} and \eqref{eq:cnsind2} hold.
\end{proof}

\begin{definition}[Constant of motion/conservation law with time delay]
\label{def:leicond}
We say that a quantity
$C(t,t+\tau,q(t),q(t-\tau),q(t+\tau),\dot{q}(t),\dot{q}(t-\tau),\dot{q}(t+\tau))$ is
a \emph{constant of motion with time delay $\tau$} if
\begin{equation}
\label{eq:conslaw:td}
\frac{d}{dt} C(t,t+\tau,q(t),q(t-\tau),q(t+\tau),\dot{q}(t),\dot{q}(t-\tau),\dot{q}(t+\tau))= 0
\end{equation}
along all the extremals $q(\cdot)$ with time delay (\textrm{cf.} Definition~\ref{def:scale:ext}).
The equality \eqref{eq:conslaw:td} is then a conservation law with time delay.
\end{definition}

Theorem~\ref{theo:cdrnd} generalizes the DuBois--Reymond necessary
optimality condition (\textrm{cf.} Theorem~\ref{theo:cdr})
to problems of the calculus of variations with time delay.

\begin{theorem}[DuBois--Reymond necessary condition with time delay]
\label{theo:cdrnd}
If $q(\cdot)$ is an extremal with time delay,
then it satisfies the following conditions:
\begin{equation}
\label{eq:cdrnd}
\frac{d}{dt}\left\{L[q]_{\tau}(t)-\dot{q}(t)\cdot(\partial_{3} L[q]_{\tau}(t)
+\partial_{5} L[q]_{\tau}(t+\tau))\right\} = \partial_{1} L[q]_{\tau}(t)
\end{equation}
for $t_1\leq t\leq t_{2}-\tau$, and
\begin{equation}
\label{eq:cdrnd1}
\frac{d}{dt}\left\{L[q]_{\tau}(t)
-\dot{q}(t)\cdot\partial_{3} L[q]_{\tau}(t)\right\}
=\partial_{1} L[q]_{\tau}(t)
\end{equation}
for $t_2-\tau\leq t\leq t_{2}$.
\end{theorem}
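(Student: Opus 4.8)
The plan is to establish both identities by differentiating directly along an extremal, in exactly the spirit of the classical proof of Theorem~\ref{theo:cdr}, but now feeding in the two branches of the delayed Euler--Lagrange system \eqref{EL1}. Since those branches govern different subintervals, I would treat $t_1\le t\le t_2-\tau$ and $t_2-\tau\le t\le t_2$ separately, and in each case reduce the total derivative on the left of \eqref{eq:cdrnd}/\eqref{eq:cdrnd1} using the product and chain rules together with the appropriate equation of \eqref{EL1}.

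For $t_1\le t\le t_2-\tau$ I would first write
\begin{equation*}
\begin{split}
\frac{d}{dt}&\left\{L[q]_{\tau}(t)-\dot{q}(t)\cdot\bigl(\partial_{3}L[q]_{\tau}(t)+\partial_{5}L[q]_{\tau}(t+\tau)\bigr)\right\}
=\frac{d}{dt}L[q]_{\tau}(t)\\
&\quad-\ddot{q}(t)\cdot\bigl(\partial_{3}L[q]_{\tau}(t)+\partial_{5}L[q]_{\tau}(t+\tau)\bigr)
-\dot{q}(t)\cdot\frac{d}{dt}\bigl(\partial_{3}L[q]_{\tau}(t)+\partial_{5}L[q]_{\tau}(t+\tau)\bigr),
\end{split}
\end{equation*}
expand the first term by the chain rule,
\begin{equation*}
\begin{split}
\frac{d}{dt}L[q]_{\tau}(t)={}&\partial_{1}L[q]_{\tau}(t)+\partial_{2}L[q]_{\tau}(t)\cdot\dot{q}(t)+\partial_{3}L[q]_{\tau}(t)\cdot\ddot{q}(t)\\
&+\partial_{4}L[q]_{\tau}(t)\cdot\dot{q}(t-\tau)+\partial_{5}L[q]_{\tau}(t)\cdot\ddot{q}(t-\tau),
\end{split}
\end{equation*}
and substitute the first equation of \eqref{EL1} for the factor $\frac{d}{dt}\bigl(\partial_{3}L[q]_{\tau}(t)+\partial_{5}L[q]_{\tau}(t+\tau)\bigr)$. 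The terms $\partial_{2}L[q]_{\tau}(t)\cdot\dot{q}(t)$ and $\partial_{3}L[q]_{\tau}(t)\cdot\ddot{q}(t)$ then cancel their opposites exactly as in the classical argument, leaving $\partial_{1}L[q]_{\tau}(t)$ together with a block of genuinely delayed cross terms.

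The main obstacle is disposing of that residual block, which is
\begin{equation*}
\begin{split}
&\partial_{4}L[q]_{\tau}(t)\cdot\dot{q}(t-\tau)+\partial_{5}L[q]_{\tau}(t)\cdot\ddot{q}(t-\tau)\\
&\qquad-\dot{q}(t)\cdot\partial_{4}L[q]_{\tau}(t+\tau)-\ddot{q}(t)\cdot\partial_{5}L[q]_{\tau}(t+\tau).
\end{split}
\end{equation*}
Setting $g(t):=\dot{q}(t)\cdot\partial_{4}L[q]_{\tau}(t+\tau)+\ddot{q}(t)\cdot\partial_{5}L[q]_{\tau}(t+\tau)$, this block is the finite difference $g(t-\tau)-g(t)$; the analogous computation on $t_2-\tau\le t\le t_2$, using the second equation of \eqref{EL1}, leaves the single piece $g(t-\tau)$. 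In contrast to the classical case, where no such terms are present, these residuals do not cancel pointwise under \eqref{EL1}, and I expect this to be the crux of the matter. The natural device to try is the one used in the proof of Theorem~\ref{thm:CNSI:SCV} --- the shift $t\mapsto t+\tau$ combined with the convention $L[q]_{\tau}\equiv 0$ on the history interval $[t_1-\tau,t_1]$ --- but this only telescopes after integration and does not obviously return zero, so the delicate point is whether the pointwise identities \eqref{eq:cdrnd}--\eqref{eq:cdrnd1} hold for every extremal as stated or must be read in a weaker, integrated sense. Settling exactly how the delayed terms are controlled is where the real work lies.
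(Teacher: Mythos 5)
Your proposal sets up the correct pointwise computation and correctly isolates the obstruction: after inserting the first equation of \eqref{EL1}, what survives besides $\partial_{1}L[q]_{\tau}(t)$ is exactly your residual block $g(t-\tau)-g(t)$, with $g(t)=\dot{q}(t)\cdot\partial_{4}L[q]_{\tau}(t+\tau)+\ddot{q}(t)\cdot\partial_{5}L[q]_{\tau}(t+\tau)$, and this block does not vanish pointwise. But the proposal stops there, so it is not a proof: no mechanism is supplied for disposing of the block. The device you are missing --- the one the paper actually uses --- is to work under the integral sign from the outset rather than pointwise. The paper integrates the total derivative over all of $[t_1,t_2]$ (its equation \eqref{pr}), performs the change of variables $t=\sigma+\tau$ only in the genuinely delayed terms, and invokes the conventions that contributions vanish on the history interval $[t_1-\tau,t_1]$ and for arguments beyond $t_2$. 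Under the integral the shift is harmless, and in your notation it yields precisely $\int_{t_1}^{t_2}g(t-\tau)\,dt=\int_{t_1}^{t_2}g(t)\,dt$: the residual block has zero integral even though it is not zero pointwise. Substituting \eqref{EL1} then gives the identity \eqref{eq3}, whose right-hand side is $\int_{t_1}^{t_2-\tau}\partial_{1}L[q]_{\tau}(t)\,dt$, and from this the paper reads off \eqref{eq:cdrnd}.

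Your closing worry, however, is not a confusion on your part but a genuine weakness of that final step. The integrate-then-shift argument establishes an equality of integrals over the fixed full interval only; to conclude the pointwise equality \eqref{eq:cdrnd} one needs the same identity on arbitrary subintervals, and, as you observe, the shift does not localize --- restricting to $[c,d]\subseteq[t_1,t_2-\tau]$ sends the delayed terms to $[c-\tau,d-\tau]$, where nothing is available to cancel them. So what the paper's method honestly delivers is the DuBois--Reymond condition in integrated (mean) form, and the passage to the pointwise statements \eqref{eq:cdrnd}--\eqref{eq:cdrnd1} is asserted rather than justified. In short: relative to the paper, your proposal lacks the integration device that kills the residual block on average; but your diagnosis of where the real difficulty lies coincides exactly with the step the paper's own proof glosses over.
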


\begin{proof}
We only prove the theorem in the interval $t_{1}\leq t\leq t_{2}-\tau$
(the proof is similar in the interval $t_{2}-\tau\leq t\leq t_{2}$).
We derive equation \eqref{eq:cdrnd} as follows:
\begin{equation}
\label{pr}
\begin{split}
\int_{t_1}^{t_2}\frac{d}{dt}&\left[L[q]_{\tau}(t)-\dot{q}(t)\cdot(\partial_{3} L[q]_{\tau}(t)
+\partial_{5} L[q]_{\tau}(t+\tau))\right]dt\\
&=\int_{t_1}^{t_2}\Bigr[\partial_1 L[q]_{\tau}(t)+
\partial_2 L[q]_{\tau}(t)\cdot \dot{q}(t)-\partial_{5} L[q]_{\tau}(t+\tau)\cdot\ddot{q}(t)\\
&\qquad -\frac{d}{dt}\left\{\partial_{3} L[q]_{\tau}(t)
+\partial_{5} L[q]_{\tau}(t+\tau)\right\}\cdot \dot{q}(t)\Bigr]dt\\
&\qquad +\int_{t_1}^{t_2}\left[\partial_4 L[q]_{\tau}(t)\cdot\dot{q}(t-\tau)
+\partial_5 L[q]_{\tau}(t)\cdot\ddot{q}(t-\tau)\right]dt.
\end{split}
\end{equation}
By performing a linear change of variables $t=\sigma+\tau$ in the last integral
of \eqref{pr}, in the interval where $t_{1}\leq t\leq t_{2}-\tau$,
the equation \eqref{pr} becomes
\begin{equation}
\label{eq3}
\begin{split}
\int_{t_1}^{t_2}\frac{d}{dt}&\left[L[q]_{\tau}(t)-\dot{q}(t)\cdot(\partial_{3} L[q]_{\tau}(t)
+\partial_{5} L[q]_{\tau}(t+\tau))\right]dt\\
&=\int_{t_1}^{t_2-\tau}\Bigl(\partial_1 L[q]_{\tau}(t)
+\dot{q}(t)\cdot\bigr(\partial_{2}L[q]_{\tau}(t)
+\partial_{4}L[q]_{\tau}(t+\tau))\\
&\qquad -\frac{d}{dt}\left[\partial_{3}L[q]_{\tau}(t)
+\partial_{5}L[q]_{\tau}(t+\tau)\right] \cdot \dot{q}(t)\Bigr)dt.
\end{split}
\end{equation}
We finally obtain \eqref{eq:cdrnd} by substituting the Euler--Lagrange
equation with time delay \eqref{EL1} into \eqref{eq3}.
\end{proof}

Theorem~\ref{theo:tnnd} establishes an extension of Noether's
theorem to problems of the calculus of variations with time delay.

\begin{theorem}[The Noether symmetry theorem with time delay in Lagrangian form]
\label{theo:tnnd}
If functional \eqref{Pe} is invariant in the
sense of Definition~\ref{def:invnd}, then the quantity
$C(t,t+\tau,q(t),q(t-\tau),q(t+\tau),\dot{q}(t),\dot{q}(t-\tau),\dot{q}(t+\tau))$,
defined by
\begin{multline}
\label{eq:tnnd}
\left(\partial_{3} L[q]_{\tau}(t)
+\partial_{5} L[q]_{\tau}(t+\tau)\right)\cdot\xi(t,q(t))\\
+\Bigl(L[q]_{\tau}-\dot{q}(t)\cdot(\partial_{3} L[q]_{\tau}(t)
+\partial_{5} L[q]_{\tau}(t+\tau))\Bigr)\eta(t,q(t))
\end{multline}
for $t_1\leq t\leq t_{2}-\tau$ and
\begin{equation}
\label{eq:tnnd1}
\partial_{3} L[q]_{\tau}(t)\cdot\xi(t,q(t))
+\Bigl(L[q]_{\tau}-\dot{q}(t)\cdot\partial_{3} L[q]_{\tau}(t)\Bigr)\eta(t,q(t))
\end{equation}
for $t_2-\tau\leq t\leq t_{2}$, is a constant of motion with time delay
(\textrm{cf.} Definition~\ref{def:leicond}).
\end{theorem}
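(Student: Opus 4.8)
The plan is to imitate the proof of the classical Noether theorem (Theorem~\ref{theo:tnoe}), but carried out separately on each of the two time-intervals $[t_1,t_2-\tau]$ and $[t_2-\tau,t_2]$ on which the delayed dynamics splits. On each interval I start from the corresponding necessary condition of invariance (\eqref{eq:cnsind1} and \eqref{eq:cnsind2}, respectively) and substitute into the integrand the appropriate Euler--Lagrange equation with time delay \eqref{EL1} together with the appropriate DuBois--Reymond condition with time delay (Theorem~\ref{theo:cdrnd}), the goal being to recognize the integrand as a total time derivative of the claimed conserved quantity.

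Consider first the interval $t_1\le t\le t_2-\tau$ and abbreviate $\Phi(t)=\partial_{3}L[q]_{\tau}(t)+\partial_{5}L[q]_{\tau}(t+\tau)$. Using the first Euler--Lagrange equation in \eqref{EL1}, the coefficient of $\xi(t,q)$ in \eqref{eq:cnsind1} becomes $\frac{d}{dt}\Phi(t)$, and using \eqref{eq:cdrnd} the coefficient of $\eta(t,q)$ becomes $\frac{d}{dt}\{L[q]_{\tau}(t)-\dot q(t)\cdot\Phi(t)\}$. The integrand of \eqref{eq:cnsind1} then reads
\begin{equation*}
\frac{d}{dt}\{L[q]_{\tau}(t)-\dot q(t)\cdot\Phi(t)\}\,\eta+\frac{d}{dt}\Phi(t)\cdot\xi+\Phi(t)\cdot(\dot\xi-\dot q(t)\dot\eta)+L[q]_{\tau}(t)\dot\eta.
\end{equation*}
Regrouping $\frac{d}{dt}\Phi(t)\cdot\xi+\Phi(t)\cdot\dot\xi=\frac{d}{dt}(\Phi(t)\cdot\xi)$ and collecting the remaining terms as $\frac{d}{dt}\{(L[q]_{\tau}(t)-\dot q(t)\cdot\Phi(t))\,\eta\}$, the whole integrand is exactly $\frac{d}{dt}$ of the quantity \eqref{eq:tnnd}.

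The interval $t_2-\tau\le t\le t_2$ is handled identically, now with the second Euler--Lagrange equation in \eqref{EL1} and the condition \eqref{eq:cdrnd1}; here the role of $\Phi$ is played by $\partial_{3}L[q]_{\tau}(t)$ alone, and the integrand of \eqref{eq:cnsind2} collapses to $\frac{d}{dt}$ of \eqref{eq:tnnd1}. Since the necessary condition of invariance holds for every subinterval $I\subseteq[t_1,t_2]$, the vanishing of these integrals over arbitrary subintervals forces the integrands to vanish pointwise, which is precisely the conservation law \eqref{eq:conslaw:td} for the quantity defined by \eqref{eq:tnnd}--\eqref{eq:tnnd1}.

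The algebraic regrouping is routine, being formally identical to the classical computation once $\Phi$ is introduced. The step that needs care is the bookkeeping of the advanced argument $t+\tau$: I must check that the $\partial_{5}L[q]_{\tau}(t+\tau)$ contributions combine with the $\partial_{3}L[q]_{\tau}(t)$ contributions into the single total derivative $\frac{d}{dt}\Phi$ supplied by \eqref{EL1}, and in particular that the cross term $-\Phi\cdot\dot q\dot\eta$ merges correctly with $L[q]_{\tau}\dot\eta$ to produce $(L[q]_{\tau}-\dot q\cdot\Phi)\dot\eta$. The other delicate point is justifying the passage from the integral identities to the pointwise conservation law through the arbitrariness of $I$, exactly as the integral sign was removed in the proof of Theorem~\ref{theo:cnsi}.
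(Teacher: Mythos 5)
Your proposal is correct and follows essentially the same route as the paper's own proof: on each of the two intervals you substitute the delayed Euler--Lagrange equation \eqref{EL1} and the delayed DuBois--Reymond condition (\eqref{eq:cdrnd}, respectively \eqref{eq:cdrnd1}) into the corresponding invariance identity (\eqref{eq:cnsind1}, respectively \eqref{eq:cnsind2}), recognize the integrand as the total derivative of \eqref{eq:tnnd} (respectively \eqref{eq:tnnd1}), and invoke the arbitrariness of the subinterval $I$ to pass from the vanishing integral to the pointwise conservation law. The only cosmetic difference is your introduction of the abbreviation $\Phi(t)=\partial_{3}L[q]_{\tau}(t)+\partial_{5}L[q]_{\tau}(t+\tau)$, which matches the paper's computation term for term.
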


\begin{proof}
We prove the theorem in the interval $t_1\leq t\leq t_{2}-\tau$.
The proof is similar for the interval $t_2-\tau\leq t\leq t_{2}$.
Noether's constant of motion with time delay \eqref{eq:tnnd} follows by
using the DuBois--Reymond condition with time delay \eqref{eq:cdrnd}
(similarly, \eqref{eq:tnnd1} follows by using \eqref{eq:cdrnd1})
and the Euler--Lagrange equation with time delay \eqref{EL1}
into the necessary condition of invariance \eqref{eq:cnsind1}:
\begin{equation}
\label{eq:cnsind11}
\begin{split}
0&=\int_{t_1}^{t_2-\tau}\Bigl[\partial_{1}
L[q]_{\tau}(t)\eta(t,q)+\left(\partial_{2}
L[q]_{\tau}(t)+\partial_4 L[q]_{\tau}(t+\tau)\right)\cdot\xi(t,q)\\
&\quad +\left(\partial_{3}L[q]_{\tau}(t)+\partial_5
L[q]_{\tau}(t+\tau)\right)\cdot\left(\dot{\xi}(t,q)
-\dot{q}(t)\dot{\eta}(t,q)\right)+L[q]_{\tau}(t)\dot{\eta}(t,q)\Bigr]dt\\
&= \int_{t_1}^{t_2-\tau}\Bigl[\frac{d}{dt}\left(\partial_{3}
L[q]_{\tau}(t)+\partial_5L[q]_{\tau}(t+\tau)\right)\cdot\xi(t,q)\\
&\quad +\left(\partial_{3}
L[q]_{\tau}(t)+\partial_5L[q]_{\tau}(t+\tau)\right)\cdot\dot{\xi}(t,q)\\
&\quad +\frac{d}{dt}\left\{L[q]_{\tau}(t)-\dot{q}(t)\cdot(\partial_{3} L[q]_{\tau}(t)
+\partial_{5} L[q]_{\tau}(t+\tau))\right\}\eta(t,q)\\
&\quad +\left\{L[q]_{\tau}(t)-\dot{q}(t)\cdot(\partial_{3} L[q]_{\tau}(t)
+\partial_{5} L[q]_{\tau}(t+\tau))\right\}\dot{\eta}(t,q)\Bigr]dt\\
&= \int_{t_1}^{t_2-\tau}\frac{d}{dt}\Bigl[\left(\partial_{3} L[q]_{\tau}(t)
+\partial_{5} L[q]_{\tau}(t+\tau)\right)\cdot\xi(t,q(t))\\
&\quad +\Bigl(L[q]_{\tau}-\dot{q}(t)\cdot(\partial_{3} L[q]_{\tau}(t)
+\partial_{5} L[q]_{\tau}(t+\tau))\Bigr)\eta(t,q(t))\Bigr]dt = 0.
\end{split}
\end{equation}
Taking into consideration that equation \eqref{eq:cnsind11} holds for
any subinterval $I\subseteq [t_1,t_2]$, we conclude that
\begin{multline*}
\left(\partial_{3} L[q]_{\tau}(t)
+\partial_{5} L[q]_{\tau}(t+\tau)\right)\cdot\xi(t,q(t))\\
+\Bigl(L[q]_{\tau}-\dot{q}(t)\cdot(\partial_{3} L[q]_{\tau}(t)
+\partial_{5} L[q]_{\tau}(t+\tau))\Bigr)\eta(t,q(t))= \text{constant}.
\end{multline*}
\end{proof}


\subsection{Optimal control with time delay}
\label{sub2}

Theorem~\ref{theo:tnnd} gives a Lagrangian formulation
of Noether's principle to the time delay setting.
Now we give a Hamiltonian formulation of Noether's principle
for more general problems of optimal control with time delay (Theorem~\ref{thm:NT:OC}).
The result is obtained as a corollary of Theorem~\ref{theo:tnnd}.

The optimal control problem with time delay is defined as follows: to minimize
\begin{equation}
\label{Pond}
\mathcal{I}^{\tau}[q(\cdot),u(\cdot)] = \int_{t_1}^{t_2}
L\left(t,q(t),u(t),q(t-\tau),u(t-\tau)\right) dt
\end{equation}
subject to the delayed control system
\begin{equation}
 \label{ci}
\dot{q}(t)=\varphi\left(t,q(t),u(t),q(t-\tau),u(t-\tau)\right)
\end{equation}
and initial condition
\begin{equation}
\label{ic}
q(t)=\delta(t),\quad t\in[t_{1}-\tau,t_{1}],
\end{equation}
where $q(\cdot) \in C^{1}\left([t_1-\tau,t_2], \mathbb{R}^{n}\right)$,
$u(\cdot) \in C^{0}\left([t_1-\tau,t_2], \mathbb{R}^{m}\right)$,
the Lagrangian $L : [t_1,t_2] \times \mathbb{R}^{n}\times\mathbb{R}^{m}
\times \mathbb{R}^{n}\times\mathbb{R}^{m} \rightarrow \mathbb{R}$
and the velocity vector $\varphi : [t_1,t_2] \times \mathbb{R}^{n}\times
\mathbb{R}^{m} \times \mathbb{R}^{n} \times \mathbb{R}^{m} \rightarrow \mathbb{R}^n$
are assumed to be $C^{1}$-functions with respect to all their arguments,
$t_{1} < t_{2}$ are fixed in $\mathbb{R}$, and $\tau$ is a given positive
real number such that $\tau<t_{2}-t_{1}$. As before, we assume
that $\delta$ is a given piecewise smooth function.

\begin{remark}
In the particular case when $\varphi(t,q,u,q_\tau,u_\tau) = u$,
problem \eqref{Pond}--\eqref{ic} is reduced to the problem
of the calculus of variations with time delay \eqref{Pe}--\eqref{Pe2}.
\end{remark}

\begin{notation}
We introduce the operators $[\cdot,\cdot]_{\tau}$ and
$[\cdot,\cdot,\cdot]_{\tau}$ defined by
\begin{equation*}
[q,u]_{\tau}(t)=\left(t,q(t),u(t),q(t-\tau),u(t-\tau)\right),
\end{equation*}
where $q(\cdot) \in C^{1}\left([t_1-\tau,t_2], \mathbb{R}^{n}\right)$
and $u(\cdot) \in C^{0}\left([t_1-\tau,t_2], \mathbb{R}^{m}\right)$; and
\begin{equation*}
[q,u,p]_{\tau}(t)=\left(t,q(t),u(t),q(t-\tau),u(t-\tau),p(t)\right),
\end{equation*}
where $q(\cdot) \in C^{1}\left([t_1-\tau,t_2], \mathbb{R}^{n}\right)$,
$p(\cdot) \in C^{1}\left([t_1,t_2], \mathbb{R}^{n}\right)$,
$u(\cdot) \in C^{0}\left([t_1-\tau,t_2], \mathbb{R}^{m}\right)$.
\end{notation}

\begin{theorem}[\cite{DH:1968}]
\label{theo:pmpnd}
If $(q(\cdot),u(\cdot))$ is a minimizer of
\eqref{Pond}--\eqref{ic}, then there exists a
covector function $p(\cdot)\in C^{1}\left([t_1,t_2],
\mathbb{R}^{n}\right)$ such that the following conditions hold:
\begin{itemize}
\item \emph{the Hamiltonian systems with time delay}
\begin{equation}
\label{eq:Hamnd}
\begin{cases}
\dot{q}(t)=\partial_6 H[q,u,p]_{\tau}(t)\\
\dot{p}(t)=-\partial_2 H[q,u,p]_{\tau}(t)
-\partial_4 H[q,u,p]_{\tau}(t+\tau)
\end{cases}
\end{equation}
for $t_{1}\leq t\leq t_{2}-\tau$, and
\begin{equation}
\label{eq:Hamnd2}
\begin{cases}
\dot{q}(t) = \partial_6 H[q,u,p]_{\tau}(t)  \\
\dot{p}(t) = -\partial_2 H[q,u,p]_{\tau}(t)
\end{cases}
\end{equation}
for $t_{2}-\tau\leq t\leq t_{2}$;

\item \emph{the stationary conditions with time delay}
\begin{equation}
\label{eq:CE}
\partial_3 H[q,u,p]_{\tau}(t)+\partial_5 H[q,u,p]_{\tau}(t+\tau)= 0
\end{equation}
for $t_{1}\leq t\leq t_{2}-\tau$, and
\begin{equation}
\label{eq:CE12}
 \partial_3 H[q,u,p]_{\tau}(t)= 0
\end{equation}
for $t_{2}-\tau\leq t\leq t_{2}$;
\end{itemize}
where the Hamiltonian $H$ is defined by
\begin{equation}
\label{eq:Hnd11}
H[q,u,p]_{\tau}(t) = L[q,u]_{\tau}(t) + p(t)
\cdot \varphi[q,u]_{\tau}(t).
\end{equation}
\end{theorem}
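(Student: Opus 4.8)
The plan is to reduce the delayed Pontryagin principle to the delayed Euler--Lagrange equations of Theorem~\ref{th:EL1} by the Lagrange multiplier device. Given a minimizer $(q(\cdot),u(\cdot))$ of \eqref{Pond}--\eqref{ic}, I would adjoin the dynamic constraint \eqref{ci} through a $C^1$ covector $p(\cdot)$ and form, with the Hamiltonian \eqref{eq:Hnd11}, the augmented functional
\[ J[q,u,p]=\int_{t_1}^{t_2}\Bigl(H[q,u,p]_\tau(t)-p(t)\cdot\dot q(t)\Bigr)\,dt . \]
On admissible pairs the bracket reduces to $L[q,u]_\tau(t)$, so $J$ agrees with $\mathcal{I}^\tau$ there. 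The key idea is to regard the triple $X=(q,u,p)$ as the state of a free variational problem with time delay of the form \eqref{Pe}, whose Lagrangian $H[q,u,p]_\tau(t)-p\cdot\dot q$ depends on $\dot q$ but on none of $\dot u$, $\dot p$, or any delayed velocity. Thus $(q,u,p)$ should be an extremal of this extended delayed problem, and Theorem~\ref{th:EL1} applies componentwise.

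Carrying this out, I would write the delayed Euler--Lagrange equation \eqref{EL1} for each block of $X$. The $p$--block contains no velocity, so its equation is the algebraic identity $\partial_6 H[q,u,p]_\tau(t)-\dot q(t)=0$, which returns the state equation $\dot q=\partial_6 H$ in both regimes. The $u$--block also carries no velocity, and for $t_1\le t\le t_2-\tau$ its equation reads $\partial_3 H[q,u,p]_\tau(t)+\partial_5 H[q,u,p]_\tau(t+\tau)=0$, i.e.\ the stationary condition \eqref{eq:CE}, collapsing to \eqref{eq:CE12} on $t_2-\tau\le t\le t_2$. Finally the $q$--block has $\partial_{\dot q}\{H-p\cdot\dot q\}=-p$ and no delayed velocity, so its delayed Euler--Lagrange equation becomes $-\dot p(t)=\partial_2 H[q,u,p]_\tau(t)+\partial_4 H[q,u,p]_\tau(t+\tau)$ on $t_1\le t\le t_2-\tau$ and $-\dot p(t)=\partial_2 H[q,u,p]_\tau(t)$ on $t_2-\tau\le t\le t_2$; these give the adjoint equations of the Hamiltonian systems \eqref{eq:Hamnd} and \eqref{eq:Hamnd2}. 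The two-regime structure and the shifted arguments $t+\tau$ are produced automatically by Theorem~\ref{th:EL1}, just as the change of variables $t=\sigma+\tau$ produced them in the proof of Theorem~\ref{thm:CNSI:SCV}.

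The main obstacle is justifying the multiplier step rigorously: one must guarantee the existence of a $C^1$ covector $p$ for which $(q,u,p)$ is a genuine free extremal of $J$, rather than merely a constrained critical point. For the present problem this is the statement that the minimizer is normal, which holds because the dynamics \eqref{ci} give $\dot q$ explicitly and the control is unconstrained; concretely, I would take $p$ to be the solution of the adjoint delay equation above together with the natural (transversality) condition $p(t_2)=0$ coming from the free right endpoint, and then verify the stationary conditions directly from first-order optimality. A secondary technical point is the regularity mismatch---$u(\cdot)$ is only continuous whereas Theorem~\ref{th:EL1} presumes $C^2$ states---but since the augmented Lagrangian is independent of $\dot u$ and $\dot p$ the only integration by parts is in $q$, so the derivation survives under the stated regularity. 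As an alternative avoiding the multiplier rule altogether, one could argue directly: perturb $u\mapsto u+\varepsilon v$, linearize \eqref{ci} to obtain the delayed variational equation for the state variation, impose $\frac{d}{d\varepsilon}\mathcal{I}^\tau|_{\varepsilon=0}=0$, and introduce $p$ through the adjoint equation to cancel the state-variation terms (handling the delayed contributions by the same $t=\sigma+\tau$ shift), leaving a condition in $v$ alone whose arbitrariness yields \eqref{eq:CE}--\eqref{eq:CE12}.
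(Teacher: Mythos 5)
First, a point of comparison: the paper does not prove Theorem~\ref{theo:pmpnd} at all --- it is imported from Hughes \cite{DH:1968}, so there is no in-paper proof to measure yours against, and your attempt must stand on its own. As written it has a genuine gap at its center. Your main route is: form the augmented functional $J[q,u,p]=\int_{t_1}^{t_2}\bigl(H[q,u,p]_\tau(t)-p(t)\cdot\dot q(t)\bigr)\,dt$, declare that $(q,u,p)$ ``should be an extremal of this extended delayed problem,'' and then apply Theorem~\ref{th:EL1} blockwise. The blockwise computation is correct and does reproduce \eqref{eq:Hamnd}--\eqref{eq:CE12} exactly: the $p$-block yields the state equation, the $u$-block the stationary conditions, the $q$-block the adjoint equations, with the two-regime structure coming from the two regimes of \eqref{EL1}. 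But the step this rests on is circular: the assertion that a minimizer of the constrained problem \eqref{Pond}--\eqref{ic} admits a $C^1$ multiplier $p$ making $(q,u,p)$ stationary for the free functional $J$ is precisely the Lagrange multiplier rule for delayed control systems, i.e., it is the substance of Theorem~\ref{theo:pmpnd} itself. Worse, Theorem~\ref{th:EL1} is a statement about \emph{minimizers}, and $(q,u,p)$ is never a minimizer of $J$: since $J$ is affine in $p$, one can drive $J$ to $-\infty$ by choosing triples violating the dynamics, so $J$ has no minimizers at all. What you would need is that the conclusion of Theorem~\ref{th:EL1} holds at stationary points and that $(q,u,p)$ is one --- and the latter is exactly what has to be proved.

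You do acknowledge the problem, and your final sentence sketches the correct repair: define $p$ as the solution of the adjoint delay equation (backwards, in two regimes, with advanced arguments $t+\tau$) with terminal condition $p(t_2)=0$, perturb $u\mapsto u+\varepsilon v$, linearize \eqref{ci} to get the delayed variational equation, compute the first variation of $\mathcal{I}^\tau$, use the $t=\sigma+\tau$ shift and integration by parts to cancel the state-variation terms, and conclude \eqref{eq:CE}--\eqref{eq:CE12} from the arbitrariness of $v$. That one sentence is where the entire proof lives --- it is essentially the argument of \cite{DH:1968} --- and it is left unexecuted: existence and uniqueness for the backward linear adjoint delay system, the variation-of-trajectory formula for the delayed dynamics, and the cancellation computation are all omitted. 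The regularity mismatch you mention ($u$ only $C^0$, $p$ only $C^1$, versus the $C^2$ admissibility in Theorem~\ref{th:EL1}) is a second reason the blockwise application of Theorem~\ref{th:EL1} is not legitimate, whereas the direct variational argument avoids it. In fairness, your multiplier device is exactly the heuristic the paper itself invokes later, in passing from \eqref{Pond}--\eqref{ic} to \eqref{eq:pcond} and in deriving Theorem~\ref{thm:NT:OC} from Theorem~\ref{theo:tnnd}; but the paper uses that device only to transfer the notion of invariance and the conserved quantity, not to establish Theorem~\ref{theo:pmpnd}, which it deliberately cites from the literature rather than proves.
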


\begin{definition}
\label{scale:Pont:Ext}
A triplet $\left(q(\cdot),u(\cdot),p(\cdot)\right)$
satisfying the conditions of Theorem~\ref{theo:pmpnd}
is called a \emph{Pontryagin extremal with time delay}.
\end{definition}

\begin{remark}
The first equation in the Hamiltonian system
\eqref{eq:Hamnd} and \eqref{eq:Hamnd2} is nothing
but the control system with time delay
$\dot{q}(t)=\varphi[q,u]_{\tau}(t)$ given by \eqref{ci}.
\end{remark}

\begin{remark}
In classical mechanics, $p$ is called the \emph{generalized
momentum}. In the language of optimal control \cite{CD:MR29:3316b},
$p$ is known as the adjoint variable.
\end{remark}

\begin{remark}
\label{rem:cp:CV:EP:EEL}
In the particular case when $\varphi(t,q,u,q_\tau,u_\tau) = u$,
Theorem~\ref{theo:pmpnd} reduces to Theorem~\ref{th:EL1}.
We verify this here in the interval $t_{1}\leq t\leq t_{2}-\tau$,
the procedure being similar for the interval $t_{2}-\tau\leq t\leq t_{2}$.
The stationary condition with time delay \eqref{eq:CE} gives
$p(t) = -\partial_3 L[q]_{\tau}(t)-\partial_5 L[q]_{\tau}(t+\tau)$
and the second equation in the Hamiltonian system with time delay \eqref{eq:Hamnd} gives
$\dot{p}(t) =-\partial_2 L[q]_{\tau}(t)-\partial_4 L[q]_{\tau}(t+\tau)$.
Comparing both equalities, one obtains the first Euler--Lagrange equation
with time delay in \eqref{EL1}. In other words, Pontryagin extremals with time delay
(Definition~\ref{scale:Pont:Ext}) are a generalization of the Euler--Lagrange extremals
with time delay (Definition~\ref{def:scale:ext}).
\end{remark}

In classical optimal control, Theorem~\ref{theo:cdr}
(the DuBois--Reymond necessary optimality condition)
is generalized to the equality $\frac{dH}{dt}=\frac{\partial H}{\partial t}$ \cite{CD:MR29:3316b}.
Next, we extend Theorem~\ref{theo:cdrnd} (the DuBois--Reymond necessary condition with time delay)
to the more general optimal control setting with time delay.

\begin{theorem}
\label{gndb}
If a triplet $(q(\cdot),u(\cdot),p(\cdot))$
with $q(\cdot)\in C^{1}\left([t_1-\tau,t_2], \mathbb{R}^{n}\right)$,
$p(\cdot)\in C^{1}\left([t_1,t_2], \mathbb{R}^{n}\right)$,
and $u(\cdot)\in C^{1}\left([t_1-\tau,t_2], \mathbb{R}^{m}\right)$
is a Pontryagin extremal with time delay,
then it satisfies the following condition:
\begin{equation}
\label{eq:cdrn1}
\frac{d}{dt} H[q,u,p]_{\tau}(t)
= \partial_{1} H[q,u,p]_{\tau}(t),
\quad t\in [t_1,t_2].
\end{equation}
\end{theorem}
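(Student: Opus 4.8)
The plan is to mimic the proof of the Lagrangian DuBois--Reymond condition (Theorem~\ref{theo:cdrnd}), replacing the Euler--Lagrange equations with time delay by the full set of Pontryagin conditions from Theorem~\ref{theo:pmpnd}. First I would expand the total derivative of the Hamiltonian along a Pontryagin extremal by the chain rule:
\begin{multline*}
\frac{d}{dt}H[q,u,p]_{\tau}(t) = \partial_1 H[q,u,p]_{\tau}(t)
+ \partial_2 H[q,u,p]_{\tau}(t)\cdot\dot q(t)
+ \partial_3 H[q,u,p]_{\tau}(t)\cdot\dot u(t)\\
+ \partial_4 H[q,u,p]_{\tau}(t)\cdot\dot q(t-\tau)
+ \partial_5 H[q,u,p]_{\tau}(t)\cdot\dot u(t-\tau)
+ \partial_6 H[q,u,p]_{\tau}(t)\cdot\dot p(t).
\end{multline*}
Thus the claim \eqref{eq:cdrn1} is equivalent to showing that the five terms other than $\partial_1 H$ cancel. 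In what follows I abbreviate $\partial_i H(t)$ for $\partial_i H[q,u,p]_{\tau}(t)$.

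The first Hamiltonian equation $\dot q(t) = \partial_6 H(t)$ immediately rewrites the $\partial_6 H\cdot\dot p$ term as $\dot q(t)\cdot\dot p(t)$. The remaining difficulty is a mismatch of delays: the stationary condition \eqref{eq:CE} and the adjoint equation in \eqref{eq:Hamnd} carry arguments evaluated at $t+\tau$, whereas the chain rule produces delayed terms evaluated at $t-\tau$. Hence the key step -- exactly as in the proofs of Theorem~\ref{thm:CNSI:SCV} and Theorem~\ref{theo:cdrnd} -- is to integrate over a subinterval and perform the linear change of variable $t=\sigma+\tau$ in the two delayed contributions $\partial_4 H(t)\cdot\dot q(t-\tau)$ and $\partial_5 H(t)\cdot\dot u(t-\tau)$, turning them into $\partial_4 H(t+\tau)\cdot\dot q(t)$ and $\partial_5 H(t+\tau)\cdot\dot u(t)$ on $[t_1,t_2-\tau]$, the prehistory part being discarded by the convention used there.

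After this shift the integrand on $[t_1,t_2-\tau]$ regroups as
\begin{equation*}
\dot q(t)\cdot\bigl(\partial_2 H(t)+\partial_4 H(t+\tau)+\dot p(t)\bigr)
+ \dot u(t)\cdot\bigl(\partial_3 H(t)+\partial_5 H(t+\tau)\bigr),
\end{equation*}
and both brackets vanish: the first by the adjoint equation in \eqref{eq:Hamnd}, the second by the stationary condition \eqref{eq:CE}. On the complementary interval $[t_2-\tau,t_2]$ the simpler conditions \eqref{eq:Hamnd2} and \eqref{eq:CE12} (which carry no $t+\tau$ terms) produce the analogous cancellation, now using $\dot p(t)=-\partial_2 H(t)$ and $\partial_3 H(t)=0$. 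Since the resulting identity $\int\bigl(\tfrac{d}{dt}H-\partial_1 H\bigr)\,dt=0$ then holds on every subinterval $I\subseteq[t_1,t_2]$, equation \eqref{eq:cdrn1} follows.

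I expect the main obstacle to be the bookkeeping of the delayed terms under the change of variable: correctly matching the $t-\tau$ terms generated by differentiation against the $t+\tau$ terms appearing in the Pontryagin conditions, and disposing cleanly of the boundary contributions coming from the prehistory $[t_1-\tau,t_1]$ and from the terminal window near $t_2$. Once that alignment is made, the cancellation is forced by precisely the adjoint and stationarity relations, so the Hamiltonian structure does the work that the Euler--Lagrange equation did in Theorem~\ref{theo:cdrnd}.
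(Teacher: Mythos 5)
Your proposal is correct and follows essentially the same route as the paper's own proof: expand $\frac{d}{dt}H[q,u,p]_{\tau}(t)$ by the chain rule, shift the delayed terms via the change of variables $t=\sigma+\tau$ so they appear as $\partial_4 H(t+\tau)\cdot\dot q(t)$ and $\partial_5 H(t+\tau)\cdot\dot u(t)$ on $[t_1,t_2-\tau]$, and then cancel everything except $\partial_1 H$ using the adjoint equations \eqref{eq:Hamnd}--\eqref{eq:Hamnd2} and the stationary conditions \eqref{eq:CE}--\eqref{eq:CE12} on the two subintervals. Your handling of the prehistory contribution and the final localization over arbitrary subintervals matches the paper's (largely implicit) treatment of the same points.
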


\begin{proof}
We prove condition \eqref{eq:cdrn1} by direct calculations:
\begin{equation}
\label{pr11}
\begin{split}
\int_{t_1}^{t_2}&\frac{d}{dt}H[q,u,p]_{\tau}(t)dt\\
&=\int_{t_1}^{t_2}\bigr[\partial_1 H[q,u,p]_{\tau}(t)+\partial_2 H[q,u,p]_{\tau}(t)\cdot \dot{q}(t)
+\partial_3 H[q,u,p]_{\tau}(t)\cdot\dot{u}(t)\\
&\quad +\partial_6 H[q,u,p]_{\tau}(t)\cdot \dot{p}(t)\bigr]dt\\
&\quad +\int_{t_1}^{t_2}\bigr[\partial_4 H[q,u,p]_{\tau}(t)\cdot \dot{q}(t-\tau)
+\partial_5 H[q,u,p]_{\tau}(t)\cdot \dot{u}(t-\tau)\bigr]dt
\end{split}
\end{equation}
and by performing a linear change of variables $t=\nu+\tau$ in the last integral
of \eqref{pr11}, equation \eqref{pr11} becomes
\begin{equation}
\label{pr111}
\begin{split}
\int_{t_1}^{t_2}&\frac{d}{dt}H[q,u,p]_{\tau}(t)dt\\
&=\int_{t_1}^{t_2-\tau}\Bigl[\partial_1 H[q,u,p]_{\tau}(t)
+\left(\partial_2 H[q,u,p]_{\tau}(t)+\partial_4 H[q,u,p]_{\tau}(t+\tau)\right)\cdot \dot{q}(t)\\
&\quad +\left(\partial_3 H[q,u,p]_{\tau}(t)+\partial_5 H[q,u,p]_{\tau}(t+\tau)\right)\cdot \dot{u}(t)
+\partial_6 H[q,u,p]_{\tau}(t)\cdot \dot{p}(t)\Bigr]dt\\
&\quad +\int_{t_2-\tau}^{t_2}\Bigl[\partial_1 H[q,u,p]_{\tau}(t)+
\partial_2 H[q,u,p]_{\tau}(t)\cdot \dot{q}(t)\\
&\quad +\partial_3 H[q,u,p]_{\tau}(t)\cdot \dot{u}(t)
+\partial_6 H[q,u,p]_{\tau}(t)\cdot \dot{p}(t)\Bigr]dt.
\end{split}
\end{equation}
We obtain condition \eqref{eq:cdrn1} by substituting
\eqref{eq:Hamnd} and \eqref{eq:CE} into the first integral,
and substituting \eqref{eq:Hamnd2} and \eqref{eq:CE12}
into the second integral of \eqref{pr111}.
\end{proof}

Using the Lagrange multiplier rule, problem \eqref{Pond}--\eqref{ic}
is equivalent to minimizing
\begin{equation}
\label{eq:pcond}
\mathcal{J}[q(\cdot),u(\cdot),p(\cdot)] = \int_{t_{1}}^{t_{2}}
\left[H(t,q(t),u(t),q(t-\tau),u(t-\tau),p(t))-p(t)\cdot\dot{q}(t)\right]dt
\end{equation}
subject to \eqref{ic}, where $H$ is given by \eqref{eq:Hnd11}.
The notion of invariance for \eqref{Pond}--\eqref{ci}
is defined using the invariance of \eqref{eq:pcond}.

\begin{definition}[\textrm{cf.} Definition~\ref{def:invnd}]
\label{def:invnd-co1}
Consider the following $s$-parameter group of infinitesimal transformations:
\begin{equation}
\label{eq:tinfnd}
\begin{cases}
\bar{t} = t + s\eta(t,q,u) + o(s),\\
\bar{q}(t) = q(t) + s\xi(t,q,u) + o(s),\\
\bar{u}(t) = u(t) + s\varrho(t,q,u) + o(s),\\
\bar{p}(t) = p(t) +s\varsigma(t,q,u) + o(s),
\end{cases}
\end{equation}
where $\eta \in C^1\left(\mathbb{R}^{1+n+m}, \mathbb{R}\right)$,
$\xi, \varsigma \in C^1\left(\mathbb{R}^{1+n+m}, \mathbb{R}^n\right)$,
$\varrho \in C^0\left(\mathbb{R}^{1+n+m}, \mathbb{R}^m\right)$ are given functions.
The functional \eqref{eq:pcond} is said to be invariant under \eqref{eq:tinfnd} if
\begin{multline*}
\left.\frac{d}{ds}\right|_{s=0}
\int_{\bar{t}(I)}
\Biggl[H\Biggl(t+s\eta,q+s\xi,u+s\varrho,q(t-\tau)+s\xi_{\tau}(t),u(t-\tau)+s\varrho_{\tau}(t),\\
-(p+s\varsigma)\cdot\frac{\dot{q}+s\dot{\xi}}{1+s\dot{\eta}}\Biggr)
(1+s\dot{\eta})\Biggr] dt = 0
\end{multline*}
for any  subinterval $I \subseteq [t_1,t_2]$, where
$\varrho_{\tau}(t)=\varrho(t-\tau,q(t-\tau),u(t-\tau))$ and
$\xi_{\tau}(t)=\xi\left(t-\tau,q(t-\tau),u(t-\tau)\right)$.
\end{definition}

\begin{definition}
\label{lco}
A quantity $C(t,q(t),q(t-\tau),u(t),u(t-\tau),p(t))$, constant for $t \in [t_1,t_2]$
along any Pontryagin extremal with delay $(q(\cdot),u(\cdot),p(\cdot))$
of problem \eqref{Pond}--\eqref{ic}, is said to be a
\emph{constant of motion with delay} for \eqref{Pond}--\eqref{ic}.
\end{definition}

Theorem~\ref{thm:NT:OC} gives a Noether-type theorem for
optimal control problems with time delay.

\begin{theorem}[The Noether symmetry theorem with time delay in Hamiltonian form]
\label{thm:NT:OC}
If we have invariance in the sense of Definition~\ref{def:invnd-co1}, then
\begin{multline}
\label{eq:tnnd-co}
C(t,q(t),q(t-\tau),u(t),u(t-\tau),p(t))
= -p(t)\cdot\xi\left(t,q(t),u(t)\right)\\
+ H\left(t,q(t),u(t),q(t-\tau),u(t-\tau),p(t)\right) \eta\left(t,q(t),u(t)\right)
\end{multline}
is a constant of motion with delay for \eqref{Pond}--\eqref{ic}.
\end{theorem}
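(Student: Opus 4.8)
The plan is to obtain Theorem~\ref{thm:NT:OC} as a corollary of the Lagrangian result Theorem~\ref{theo:tnnd}, exactly as announced before the statement. By the Lagrange multiplier rule, problem \eqref{Pond}--\eqref{ic} is equivalent to the unconstrained minimization of the functional \eqref{eq:pcond}, and \eqref{eq:pcond} is a problem of the calculus of variations with time delay of the form \eqref{Pe} once we regard the triplet $x:=(q,u,p)$ as a single enlarged state of dimension $2n+m$. The corresponding augmented Lagrangian is
\[
\widetilde{L}\bigl(t,x,\dot{x},x_\tau,\dot{x}_\tau\bigr)
= H\bigl(t,q,u,q(t-\tau),u(t-\tau),p\bigr) - p\cdot\dot{q},
\]
with $H$ given by \eqref{eq:Hnd11}. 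The first thing I would record is that $\widetilde{L}$ depends on the velocities only through $\dot{q}$, and on no delayed velocity at all.

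Next I would check that the invariance hypothesis of Definition~\ref{def:invnd-co1} is nothing but invariance of \eqref{eq:pcond} in the sense of Definition~\ref{def:invnd}, applied to the enlarged state $x$ with infinitesimal generators $(\eta;\xi,\varrho,\varsigma)$. Indeed, substituting $\widetilde{L}$ into the invariance integral of Definition~\ref{def:invnd} reproduces, term by term, the integrand displayed in Definition~\ref{def:invnd-co1}, because $H$ does not involve the delayed multiplier $p(t-\tau)$ and the only transformed velocity entering $\widetilde{L}$ is $(\dot{q}+s\dot{\xi})/(1+s\dot{\eta})$. In the same spirit I would verify that the Euler--Lagrange equations \eqref{EL1} written for $\widetilde{L}$ reproduce precisely the conditions of a Pontryagin extremal with time delay: the $q$--, $u$-- and $p$--components of \eqref{EL1} yield, respectively, the adjoint equation, the stationary condition \eqref{eq:CE}, and the control system, matching \eqref{eq:Hamnd}--\eqref{eq:CE12} on each of the two subintervals. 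Hence ``constant of motion'' in the sense of Definition~\ref{def:leicond} for the enlarged problem coincides with ``constant of motion with delay'' in the sense of Definition~\ref{lco}.

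With these identifications in place, I would simply specialize the conserved quantity \eqref{eq:tnnd} of Theorem~\ref{theo:tnnd} to $\widetilde{L}$. Since $\partial_{\dot{x}}\widetilde{L}=(-p,0,0)$ in the $q,u,p$ blocks and every partial derivative with respect to a delayed velocity vanishes, all the $\partial_5$-type contributions evaluated at $t+\tau$ drop out, and on \emph{both} intervals the expression \eqref{eq:tnnd} collapses to
\[
(-p)\cdot\xi + \bigl(\widetilde{L} - \dot{q}\cdot(-p)\bigr)\eta
= -p\cdot\xi + \bigl(H - p\cdot\dot{q} + p\cdot\dot{q}\bigr)\eta
= -p\cdot\xi + H\,\eta,
\]
which is exactly \eqref{eq:tnnd-co}. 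The cancellation of the two $p\cdot\dot{q}$ terms is what makes the delayed and non-delayed subintervals produce the same formula, so that the constant of motion is valid for all $t\in[t_1,t_2]$.

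The computations above are routine specializations; the step I would treat as the main obstacle is not any single calculation but the bookkeeping that makes the reduction legitimate, namely confirming that no velocity term other than $\dot{q}$ and no delayed-velocity term survives in $\widetilde{L}$, and that the two invariance notions (Definition~\ref{def:invnd-co1} versus Definition~\ref{def:invnd} applied to $x$) and the two notions of extremal coincide. Once this correspondence is pinned down, Theorem~\ref{theo:tnnd} applies verbatim and delivers \eqref{eq:tnnd-co}.
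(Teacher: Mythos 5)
Your proof is correct and takes essentially the same approach as the paper: the paper's entire proof is the single sentence that \eqref{eq:tnnd-co} ``is obtained by applying Theorem~\ref{theo:tnnd} to problem \eqref{eq:pcond}'', which is exactly the reduction you carry out. Your write-up simply makes explicit the bookkeeping the paper leaves to the reader --- the enlarged state $(q,u,p)$, the vanishing of all delayed-velocity derivatives of $\widetilde{L}$, the identification of the two invariance notions and of Pontryagin extremals with Euler--Lagrange extremals of $\widetilde{L}$, and the cancellation of the $p\cdot\dot{q}$ terms that makes the formula identical on both subintervals.
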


\begin{proof}
The constant of motion with delay \eqref{eq:tnnd-co} is obtained
by applying Theorem~\ref{theo:tnnd} to problem \eqref{eq:pcond}.
\end{proof}

\begin{remark}
The constant of motion with time delay \eqref{eq:tnnd-co}
has the same expression in the two intervals $t_{1}\leq t\leq t_{2}-\tau$
and $t_{2}-\tau\leq t\leq t_{2}$.
\end{remark}

\begin{remark}
For the problem of the calculus of variations \eqref{Pe}--\eqref{Pe2},
the Hamiltonian \eqref{eq:Hnd11} takes the form $H = L + p \cdot u$,
with $u = \dot{q}$ and $p(t) = -\partial_3 L[q]_\tau(t)-\partial_5 L[q]_\tau(t+\tau)$
(\textrm{cf.} Remark~\ref{rem:cp:CV:EP:EEL}). In this case the
constant of motion with delay \eqref{eq:tnnd-co} reduces to
\eqref{eq:tnnd} in the interval $t_{1}\leq t\leq t_{2}-\tau$ and to \eqref{eq:tnnd1}
in the interval $t_{2}-\tau\leq t\leq t_{2}$.
\end{remark}


\section{Examples}
\label{exa}

Now we illustrate the application
of Theorem~\ref{theo:tnnd} (Example~\ref{ex:1})
and Theorem~\ref{thm:NT:OC} (Example~\ref{ex:2}).

\begin{example}
\label{ex:1}
Let us consider the problem
\begin{equation}
\label{exe1}
\begin{gathered}
\mathcal{J}[q(\cdot)] = \int_{0}^{3}
\left(\dot{q}(t) + \dot{q}(t-1)\right)^{2}dt
\longrightarrow \min\\
q(t)=-t, \quad -1\leq t\leq 0,\\
q(3)=2.
\end{gathered}
\end{equation}
Because \eqref{exe1} is autonomous, one has invariance,
in the sense of Definition~\ref{def:invnd}, with
$\eta \equiv 1$ and $\xi \equiv 0$. We obtain from
Theorem~\ref{theo:tnnd} that
\begin{equation}
\label{eq1:ex1:lc1}
\left(\dot{q}(t)+\dot{q}(t-1)\right)^2
- 2 \dot{q}(t) \left(2 \dot{q}(t) + \dot{q}(t-1)+\dot{q}(t+1)\right) = c_1,
\quad t \in [0,2],
\end{equation}
and
\begin{equation}
\label{eq2:ex1:lc1b}
\left(\dot{q}(t)+\dot{q}(t-1)\right)^2
- 2 \dot{q}(t) \left(\dot{q}(t) + \dot{q}(t-1)\right) = c_2,
\quad t \in [2,3],
\end{equation}
along any extremal $q(\cdot) \in C^1\left([-1,3], \mathbb{R}\right)$ of \eqref{exe1},
where $c_1$ and $c_2$ are constants. In this case the conservation laws
\eqref{eq1:ex1:lc1} and \eqref{eq2:ex1:lc1b} can also be obtained
by a direct application of Theorem~\ref{theo:cdrnd}.
\end{example}

\begin{example}
\label{ex:2}
Let us consider an autonomous optimal control problem with time delay,
\textrm{i.e.}, the situation when $L$ and $\varphi$ in \eqref{Pond} and \eqref{ci}
do not depend explicitly on $t$. In this case one has invariance,
in the sense of Definition~\ref{def:invnd-co1}, for
$\eta \equiv 1$ and $\xi = \varrho = \varsigma \equiv 0$.
It follows from Theorem~\ref{thm:NT:OC} that
\begin{equation}
\label{ceg}
H\left(q(t),u(t),q(t-\tau),u(t-\tau),p(t)\right) = \text{constant}
\end{equation}
along any Pontryagin extremal with delay $(q(\cdot),u(\cdot),p(\cdot))$
of the problem. In this example the same conservation law \eqref{ceg}
is obtained from Theorem~\ref{gndb}.
\end{example}


\section*{Acknowledgments}

This work was supported by FEDER funds through COMPETE
(Operational Programme Factors of Competitiveness)
and by Portuguese funds through the Center for Research and Development
in Mathematics and Applications (University of Aveiro) and the Portuguese Foundation
for Science and Technology (FCT), within project PEst-C/MAT/UI4106/2011
with COMPETE number FCOMP-01-0124-FEDER-022690.
The first author was also supported by FCT through
the program \emph{Ci\^{e}ncia Global}.

The authors are grateful to Ryan Loxton for suggestions regarding
improvement of the text, and to two anonymous referees for valuable
and prompt comments, which significantly contributed to the quality of the paper.



\medskip


\end{document}